\newcommand{\hull}[1]{\langle #1 \rangle}
\theoremstyle{thmstyleone}%
\newtheorem{theorem}{Theorem}
\newtheorem{lemma}[theorem]{Lemma}
\newtheorem{corollary}[theorem]{Corollary}
\theoremstyle{thmstyletwo}%
\theoremstyle{thmstylethree}%
\title{Partitions and covers in $\Delta$ convexity}\date{}
\author[1]{{Bijo S. Anand}\thanks{ORCID: 0000-0002-7221-904X}}
\author[2]{Manoj Changat\thanks{ORCID: 0000-0001-7257-6031}}
\author[3]{Mitre C. Dourado\thanks{ORCID: 0000-0001-9485-1073}}
\author[4]{\\ Prasanth G. Narasimha-Shenoi\thanks{ORCID: 0000-0002-5850-5410}}
\author[5]{Sabeer S. Ramla\thanks{ORCID: 0009-0001-8096-5499}}
\affil[1]{\footnotesize Department of Mathematics, Sree Narayana College, Punalur, Kollam, Kerala, India- 691035, bijos$\_$anand@yahoo.com}
\affil[2]{Department of Futures Studies, University of Kerala, Thiruvananthapuram, Kerala, India-695581
mchangat@keralauniversity.ac.in}
\affil[3]{Instituto de Computação, Universidade Federal do Rio de Janeiro, Rio de Janeiro, Brazil, mitre@ic.ufrj.br}
\affil[4]{Department of Mathematics, Government College Chittur, Palakkad, Kerala, Inida-678104, prasanthgns@gmail.com}
\affil[4]{Department of Collegiate Education, Government of Kerala, Thiruvananthapuram, Kerala, India-695033}
\affil[5]{Department of Applied Science and Engineering, Thangal Kunju Musaliar Institute of Technology, Musaliar Hills, Karuvelil, Kollam, Kerala, India-691505 sabeersainr@gmail.com}
\begin{document}

\maketitle

\begin{abstract}
Given a graph $G$ and a set $S \subseteq V(G)$, we say that $S$ is $\Delta$-convex if
there is no vertex $w \not\in S$ such that $w$ forms a triangle with two vertices of $S$.
Given an integer $p$, we are interested in the problems of finding a family of size $p$ of $\Delta$-convex sets that is a partition or a cover of $V(G)$.
We prove that both problems are \NP-complete for any fixed $p \ge 4$.
Next, we present a characterization of the graphs admitting a $\Delta$-convex $2$-partition.
Such characterization leads to a polynomial-time algorithm for finding a $\Delta$-convex $2$-partition if any.
Furthermore, we present some results relating these concepts with well-known ones like chromatic number and hull number and present closed formulas determining the minimum value $p$ for which the classical graph products admit a partition or a cover of $V(G)$ of size $p$.

\bigskip

\noindent\textbf{Keywords:} $\Delta$ convexity, $\Delta$-convex cover, $\Delta$-convex partition, graph products. \\ {\bf AMS Sub. Class.} 05C38,  05C76, 05C9, 52A01.
\end{abstract}

\section{Introduction}\label{introduction}

We consider finite, simple and undirected graphs. Given a graph $G$ and a set $S \subseteq V(G)$, the {\em $\Delta$-interval of $S$}, denoted by $[S]$, is the set formed by the vertices of $S$ and every $w \in V(G) \setminus S$ such that $w$ forms a triangle with two vertices of $S$. Sometimes, it will be useful to write $[u,v]$ standing for $[\{u,v\}]$. If $[S] = S$, then $S$ is a {\em $\Delta$-convex set} of $G$. The {\em $\Delta$-convex hull} of $S$, denoted by $\hull{S}$, is the minimum $\Delta$-convex set containing $S$. 
Let ${\cal F}$ be a family of size $p$ of $\Delta$-convex sets of $G$ whose union is $V(G)$.
If every member of ${\cal F}$ has an element that does not belong to any other member of ${\cal F}$, then ${\cal F}$ is a {\em $\Delta$-convex $p$-cover} of $G$.
If ${\cal F}$ is a cover and its members are disjoint, then ${\cal F}$ is a {\em $\Delta$-convex $p$-partition} of $G$.
In this work, we are interested in the following decision problems.

\bigskip\noindent\ovalbox{
\begin{minipage}{\dimexpr\textwidth-5\fboxsep-2\fboxrule\relax}
	\textbf{\sc $\Delta$-Convex $p$-cover} \\
	\begin{tabular}{lp{14cm}}
		Input: & A graph $G$. \\
		Question: & For a fixed integer $p$, does $G$ admit a $\Delta$-convex $p$-cover?
	\end{tabular}
\end{minipage}
}

\bigskip\noindent\ovalbox{
\begin{minipage}{\dimexpr\textwidth-5\fboxsep-2\fboxrule\relax}
	\textbf{\sc $\Delta$-Convex $p$-partition} \\
	\begin{tabular}{lp{14cm}}
		Input: & A graph $G$. \\
		Question: & For a fixed integer $p$, does $G$ admit a $\Delta$-convex $p$-partition?
	\end{tabular}
\end{minipage}
}
\bigskip

Unlike an independent set, whose any proper subset is also an independent set, a proper subset of a $\Delta$-convex set is not necessarily a $\Delta$-convex set. Such hereditary property of independent sets leads to the fact that if there is a partition of $V(G)$ into $p$ independent sets, then there is also a partition of $V(G)$ into $p+1$ independent sets as long as $p < |V(G)|$. In fact, this property does not hold in $\Delta$ convexity as can be seen in the graph of Figure~\ref{fig:nonmonotonicity}, which admits, for instance, a $\Delta$-convex $3$-partition but does not admit a $\Delta$-convex $4$-partition.

\begin{figure}[H] \label{fig:nonmonotonicity}
	\centering
	
\begin{tikzpicture}[scale=1]
	
	\pgfsetlinewidth{1pt}
	
	\tikzset{vertex/.style={circle,  draw, minimum size=5pt, inner sep=0pt}}
	\tikzset{texto/.style={circle,  minimum size=5pt, inner sep=0pt}}
	\tikzset{set/.style={circle,  draw, minimum size=30pt, inner sep=5pt}}
	
	\def\h{0}
	\def\v{0}
	
	\node [vertex] (x1) at (\h, \v  ) {};
	\node [vertex] (x2) at (\h-1, \v -1 ) {};
	\node [vertex] (x3) at (\h-1, \v+1  ) {};
	\node [vertex] (x4) at (\h+1, \v -1 ) {};
	\node [vertex] (x5) at (\h+1, \v+1  ) {};
	\node [vertex] (x6) at (\h+3, \v -1 ) {};
	\node [vertex] (x7) at (\h+3, \v+1  ) {};
	
	\draw (x1) to (x2);
	\draw (x1) to (x3);
	\draw (x1) to (x4);
	\draw (x1) to (x5);
	
	\draw (x3) to (x2);
	\draw (x4) to (x5);
	
	\draw (x6) to (x7);
	\draw (x6) to (x5);
	\draw (x6) to (x4);
	\draw (x6) to (x1);
	\draw (x7) to (x5);
	\draw (x7) to (x4);
	\draw (x7) to (x1);
	
\end{tikzpicture}

\caption{\footnotesize Example of a graph that
has a $\Delta$-convex $p$-partition for $p \in \{3,5,6,7\}$, but
has not a $\Delta$-convex $p$-partition for $p \in \{2,4\}$; and
has a $\Delta$-convex $p$-cover for $p \in \{2,3,5,6,7\}$, but
has not a $\Delta$-convex $p$-cover for $p = 4$.}

\end{figure}

However, note that any graph $G$ of order $n \ge 1$ admits a $\Delta$-convex $n$-partition, namely, the family containing a set for each vertex of $V(G)$.
Thus, for a graph $G$ of order $n \ge 2$,
we define the {\em $\Delta$-cover number} of $G$ as the minimum $p \ge 2$ such that $G$ admits a $\Delta$-convex $p$-cover, denoted by $\Phi_c(G)$; 
and we define the {\em $\Delta$-partition number} of $G$ as the minimum $p \ge 2$ such that $G$ admits a $\Delta$-convex $p$-partition, denoted by $\Theta_c(G)$.
Since every partition is a cover, we have for any graph $G$ that $\Phi_c(G) \le \Theta_c(G) \le |V(G)|$.

From now on, we consider only connected graphs of order at least $3$.
Given a graph $G$ and a set $S \subseteq V(G)$, the subgraph of $G$ induced by $S$ is denoted by $G[S]$. For $u \in V(G)$, the \emph{open neighborhood} of $u$ is denoted by $N(u)$.
We say that $u \in V(G)$ is a {\em $\Delta$-extreme vertex} if $V(G) \setminus \{u\}$ is $\Delta$-convex.
It is clear that if a graph $G$ has a $\Delta$-extreme vertex, then its $\Delta$-partition number is $2$.
Note that $u$ is $\Delta$-extreme if and only if $u$ does not belong to any triangle of $G$.

\subsection{Motivation and related works}

In the $P_3$ convexity a set $S$ is convex if there is no vertex outside $S$ having two neighbors in $S$.
Convex partitions of powers of cycles, split and chordal graphs in the $P_3$ convexity have been studied in~\cite{CDDRS2010}.
In the context of the well-known geodesic convexity, for a general graph $G$, it is known that the {\sc Convex $p$-cover} problem is \NP-complete for any fixed $p \geq 3$~\cite{artigas2010convex} and for $p = 2$~\cite{buzatu2015convex}; and that the {\sc Convex $p$-Partition} problem is \NP-complete for arbitrary graphs  for any fixed $p \geq 2$~\cite{artigas2011partitioning}.
On the other hand, it is solvable in polynomial time for bipartite graphs~\cite{grippo2016convex} and for cographs~\cite{artigas2011partitioning}.
The $\Delta$-cover number and the $\Delta$-partition number have been further investigated for disconnected graphs in \cite{artigas2010convex} and a characterization for chordal graphs is given in \cite{artigas2011partitioning}.
A rich source of convex cover and partition problems in the geodesic convexity is~\cite{buzatu2021covering} and of general abstract convexity is~\cite{van1993theory}.

It is worth noting that the problem of looking for a partition into two convex sets is very related with a well-known topic of abstract convexity theory, namely, the half-space separation problem. Recently, some papers are dedicated to this topic in graph convexities~\cite{elaroussi2024halfspaceseparationmonophonicconvexity, SEIFFARTH2023114105}. The input of such problem is a graph $G$ and two disjoint sets $A, B \subset V(G)$, and it asks whether there are disjoint convex sets containing $A$ and $B$ whose union is $V(G)$.

The $\Delta$-interval function and the associated $\Delta$-convexity were introduced by Mulder in~\cite{muldd}. In \cite{muld01}, the $\Delta$-convex sets, where they were called $\Delta$-closed sets, played an essential role in the characterization of quasi-median graphs.
Recall that the {\em $\Delta$-hull number} $h(G)$ of a graph $G$ is the cardinality of a minimum set  such that its $\Delta$-convex hull is $V(G)$. 
In \cite{bsaamc}, the authors studied the $\Delta$-hull number of a graph showing that computing this parameter is \NP-complete for general graphs, presenting polynomial-time algorithms for chordal graphs, dually chordal graphs and cographs and giving tight lower and upper bounds for this parameter. In \cite{anand2021delta}, the authors showed that the problem of computing the $\Delta$-convexity number is \W[1]-hard and \NP-hard to approximate within a factor $O(n^{1-\varepsilon})$ for any constant $\varepsilon > 0$ even for graphs with diameter $2$ and that the problem of computing the $\Delta$-interval number is \NP-complete for general graphs. For the positive side, they presented characterizations that lead to polynomial-time algorithms for computing the $\Delta$-convexity number of chordal graphs and for computing the $\Delta$-interval number of block graphs.

\subsection{Organization of the text}

This paper is organized as follows.
Section~\ref{np-complete} contains complexity results.
We begin by showing that the {\sc $\Delta$-convex $p$-partition} and {\sc $\Delta$-convex $p$-cover} problems are \NP-complete for any fixed $p \ge 4$. Next, we present a characterization of the graphs admitting a convex 2-partition. Such characterization leads to a polynomial-time algorithm for finding a convex 2-partition if any.
Section~\ref{sec:structural} contains structural results.
It has a subsection dedicated to these problems on graph classes related with chordal graph, relating partitions and covers with the chromatic number and the hull number.
It also has a subsection which deals the partitions and the cover in the three standard graph products, namely, the Cartesian, strong and lexicographic products.
We discuss some open problems in Section~\ref{sec:conclusion}.

\section{Complexity aspects} \label{np-complete}

In this section, we show that {\sc Convex $p$-partition} and {\sc Convex $p$-cover} are \NP-complete for any fixed $p \ge 4$ and present a polynomial-time algorithm for solving the {\sc $\Delta$-Convex $2$-partition} problem.

\begin{theorem} \label{the:NPCpartition}
	{\sc $\Delta$-Convex $p$-partition} is \NP-complete for any fixed $p \ge 4$.
\end{theorem}

\begin{proof}
Since to check whether a given set is convex in $\Delta$-convexity can be done in polynomial time, one can test in polynomial time whether each set of a given partition is indeed a convex set, which implies that {\sc $\Delta$-Convex $p$-partition} belongs to \NP.

For the hardness part, we present a reduction from {\sc $k$-Colorability}. It is known that {\sc $k$-Colorability} is \NP-complete for any fixed integer $k \ge 3$~\cite{GAREY1976237}. Let $G$ be an input for the {\sc $k$-Colorability} problem for a fixed $k \ge 3$. Clearly, we can assume that $G$ is a non-trivial connected graph. Let $G'$ be the graph obtained from $G$ by adding one new vertex $u$ and the edges to make $u$ a universal vertex of $G'$.

Now, observe that every pair of adjacent vertices of $G'$ is a hull set of $G'$.
Observe also that if every pair of adjacent vertices is a hull set of $G$, then the only proper convex sets of $G$ are the independent sets.
Therefore, the problem of partitioning $V(G')$ into convex sets is equivalent to the problem of partition $V(G)$ into independent sets. Since $G'$ has a universal vertex, any partition of $V(G')$ into convex sets has the singleton $u$, which implies that $G$ is $k$-colorable if and only if $G'$ has a convex $(k+1)$-partition.
\end{proof}

\begin{corollary}
	{\sc $\Delta$-Convex $p$-cover} is \NP-complete for any fixed $p \ge 4$.
\end{corollary}

\begin{proof}
Let $G'$ be the graph constructed in the proof of Theorem~\ref{the:NPCpartition} from the input graph $G$.
It suffices to show that $G'$ has a convex $p$-partition if and only if $G'$ has a convex $p$-cover.
By definition, every $p$-partition is a $p$-cover.
Therefore, it remains to prove that every $\Delta$-Convex $p$-cover ${\cal C}$ of $G'$ can be transformed into a $\Delta$-Convex $p$-partition ${\cal P}$ of $G'$. Partition ${\cal P}$ can be obtained by applying the following process iteratively. For every vertex $v$ that belongs to more than one part of ${\cal C}$, remove $v$ from all parts containing $v$ except one.
\end{proof}

Now, we focus on the problem of finding a $\Delta$-convex $p$-partition for $p = 2$.
Before presenting the characterization of the graphs that admits a $\Delta$-convex 2-partition and the polynomial-time algorithm that follows from it, we begin by characterizing the $\Delta$-convex 2-partitions.

\begin{lemma} \label{lem:atom}
Let $G$ be a graph and let $\{S_1,S_2\}$ be a $2$-partition of $V(G)$.
Then, $\{S_1,S_2\}$ is a $\Delta$-convex $2$-partition of $G$ if and only if every triangle $T$ of $G$ satisfies $T \subseteq S_i$ and $T \cap S_j = \emptyset$ for some $i \in \{1,2\}$ and $j \in \{1,2\} \setminus \{i\}$.
\end{lemma}

\begin{proof}
First, consider that $S_1$ and $S_2$ are $\Delta$-convex.	
Suppose by contradiction that the statement is not satisfied by a triangle $T = \{v_1,v_2,v_3\}$.
Since $\{S_1,S_2\}$ is a partition of $V(G)$, every vertex belongs to exactly one of these two sets.	
Then, without loss of generality, we can write $T \cap S_1 = \{v_1,v_2\}$ and $T \cap S_2 = \{v_3\}$. But, since $S_1$ is a $\Delta$-convex set and $v_1v_2 \in E(G)$, we have that $v_3 \in S_1$, which is a contradiction.

Conversely, consider that $S_1$ or $S_2$ is not $\Delta$-convex. Without loss of generality, we can assume that $S_1$ is not $\Delta$-convex, which means that there is a triangle $T$ such that $|T \cap S_1| = 2$ and $|T \cap S_2| = 1$, completing the proof.
\end{proof}

We need of the following definition. Given a graph $G$, construct a graph $T(G)$ as follows.

\begin{itemize}
	\item for every triangle of $G$, add a vertex in $T(G)$;
	\item for every vertex of $G$ not belonging to any triangle in $G$, add a vertex in $T(G)$; and
	\item add an edge in $T(G)$ if the corresponding vertices are associated with triangles of $G$ and share at least one vertex.
\end{itemize}

We can now present the characterization of the graphs admitting a $\Delta$-convex $2$-partition.

\begin{theorem} \label{the:characC2P}
A graph $G$ admits a $\Delta$-convex $2$-partition if and only if $T(G)$ is disconnected.
\end{theorem}

\begin{proof}
First, consider that $G$ has a $\Delta$-convex $2$-partition $\{S_1,S_2\}$.
Suppose by contradiction that $G$ has no $\Delta$-extreme vertices and $T(G)$ is connected.
Since $S_1$ and $S_2$ are non-empty sets, let $v_1 \in S_1$ and $v_2 \in S_2$.
The assumption that $G$ has no $\Delta$-extreme vertices implies that $G$ has a triangle $T_i$ containing $v_i$ for $i \in \{1,2\}$.
Denote by $t_i$ the vertex of $T(G)$ associated with $T_i$ for $i \in \{1,2\}$.
Since $T(G)$ is connected, there is a $(t_1,t_2)$-path $P$ in $T(G)$.
Because of Lemma~\ref{lem:atom}, we know that the triangles $T_1$ and $T_2$ are disjoint. 
Moreover, $P$ contains vertices $t'_1$ and $t'_2$ such that $t'_1t'_2$ is an edge of $T(G)$
and the corresponding triangles $T'_1$ and $T'_2$
satisfy $T'_i \subseteq S_i$ and $T'_j \cap S_i = \emptyset´$ for $i \in \{1,2\}$, which means that $T'_i \cap T'_2 = \emptyset$. However, the construction implies that $T'_1$ and $T'_2$ share at least one vertex, which is a contradiction.

Conversely, consider that $G$ has $\Delta$-extreme vertices.
As discussed above, if $v$ is a $\Delta$-extreme vertex, then $\{\{v\}, V(G) \setminus \{v\}\}$ is a $\Delta$-convex $2$-partition.

Now, consider that $G$ has no $\Delta$-extreme vertices but $T(G)$ is disconnected. 
Let $C_1$ be a connected component of $T(G)$. Define $S_1$ as the set formed by the vertices of the triangles of $G$ corresponding to the vertices of $C_1$.
We claim that $\{S_1, V(G) \setminus S_1 \}$ is a $\Delta$-convex 2-partition of $G$. 
Since $G$ has no $\Delta$-extreme vertices, every vertex $v$ of $G$ belongs to at least one triangle.
By the construction, the vertices of $T(G)$ associated with the triangles of $v$ belong to the same connected component. Therefore, $\{S_1, V(G) \setminus S_1 \}$ is a partition of $V(G)$.
It remains to show that these two sets are $\Delta$-convex.
Suppose by contradiction that $S_1$ is not $\Delta$-convex.
Therefore, there are vertices $u,v \in S_1$ and $w \not\in S_1$ such that $T = \{u,v,w\}$ is a triangle of $G$. Denote by $t$ the vertex of $T(G)$ associated with $\{u,v,w\}$.
Since $u,v \in S_1$, there is a vertex $t' \in C_1$ corresponding to a triangle $T' \ne T$ in $G$ containing $u$. By the construction of $T(G)$, $t'$ is adjacent to $t$, which implies that $t'$ belongs to $C_1$ and that $w \in S_1$, which is a contradiction.

The proof that $V(G) \setminus S_1$ is $\Delta$-convex is analogous.
\end{proof}

Next, we present the polynomial-time algorithm for finding a $\Delta$-convex 2-partition of a general graph if any. This algorithm follows from the above characterization.

\begin{algorithm}[h] \label{alg:convex2partition}
	
	\caption{{\sc Convex2Partition}}
	
	\KwIn{A graph $G$ of order $n \ge 2$.}
	\KwOut{A convex $2$-partition of $V(G)$ if one there exists; empty set otherwise}

	\For{$v \in V(G)$}{ \label{lin:enumeratingK3}
		${\cal T}_v = $ list of triangles containing $v$
	}
	
	\If{$G$ has a $\Delta$-extreme vertex $v$}{ \label{lin:extreme}
		$S_1 = \{v\}$

		$S_2 = V(G) \setminus S_1$

		\Return $\{S_1,S_2\}$ \label{lin:notriangles}
	}
	
	let $S_1$ be a triangle of $G$ \label{lin:T}
	
	\While{there is a triangle $T$ of $G$ such that $1 \le |T \cap S_1| \le 2$}{ \label{lin:while}

		$S_1 = S_1 \cup T$ \label{lin:S1}
	}	
	\If{$S_1 \ne V(G)$}{ \label{lin:afterwhile}
		$S_2 = V(G) \setminus S_1$
			
		\Return $\{S_1,S_2\}$ \label{lin:2partition}
	}
	\Return $\emptyset$ \label{lin:no2partition}

\end{algorithm}

\begin{theorem}
Algorithm~$\ref{alg:convex2partition}$ is correct.
\end{theorem}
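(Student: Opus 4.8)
The plan is to first record the basic characterization of $\Delta$-convexity in terms of triangles, which drives the whole argument: directly from the definition of $[S]$, a set $S$ is $\Delta$-convex if and only if no triangle of $G$ meets $S$ in exactly two vertices; equivalently, every triangle $T'$ of $G$ satisfies $|T' \cap S| \in \{0,1,3\}$. Indeed, a vertex $w \notin S$ lies in $[S]$ precisely when $w$ completes a triangle with two vertices of $S$, i.e.\ when some triangle has its two other vertices in $S$. With this in hand I would dispose of the triangle-free branch immediately: if $G$ has no triangle then $[X]=X$ for every $X \subseteq V(G)$, so every subset is convex, and since $n \ge 2$ the output $(\{v\}, V(G)\setminus\{v\})$ is a partition into two nonempty convex sets.

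Next I would prove soundness of the branch where a triangle exists. First note that the \texttt{while} loop terminates: each execution of its body enlarges $S_1$ by at least one vertex (because $|T' \cap S_1| \le 2 < 3$), and $S_1 \subseteq V(G)$ is finite. Upon termination the loop guard fails, so every triangle $T'$ satisfies $|T' \cap S_1| \in \{0,3\}$. By the characterization above this makes $S_1$ convex; moreover, since $|T' \cap (V(G)\setminus S_1)| = 3 - |T' \cap S_1| \in \{0,3\}$ as well, the complement $S_2 = V(G)\setminus S_1$ is also convex. Hence whenever $S_1 \ne V(G)$ the returned pair $(S_1,S_2)$ consists of two nonempty ($S_1 \supseteq T \ne \emptyset$) disjoint convex sets covering $V(G)$, that is, a valid convex $2$-partition.

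The remaining and crucial direction is completeness: I must show that if the algorithm returns $\emptyset$ (that is, $S_1 = V(G)$ at termination) then $G$ genuinely has no convex $2$-partition. The key lemma is that in \emph{any} convex $2$-partition $(A,B)$ every triangle is monochromatic: if a triangle had exactly two vertices in $A$, its third vertex would witness $[A] \ne A$, and symmetrically for $B$, so each triangle has all three vertices in $A$ or all three in $B$. I would then argue by induction on the iterations of the \texttt{while} loop that $S_1$ stays inside the part of $(A,B)$ holding the initial triangle $T$: assuming without loss of generality $T \subseteq A$, the invariant $S_1 \subseteq A$ holds initially, and whenever the body absorbs a triangle $T'$ with $1 \le |T'\cap S_1| \le 2$, that $T'$ shares a vertex with $S_1 \subseteq A$ and is monochromatic, forcing $T' \subseteq A$ and preserving the invariant. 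Consequently $S_1 \subseteq A$ at the end; if $S_1 = V(G)$ this gives $A = V(G)$ and $B = \emptyset$, contradicting that a $2$-partition has two nonempty parts.

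I expect this last paragraph to carry the real weight. The triangle-free case and soundness are essentially bookkeeping from the triangle characterization of convexity, whereas completeness hinges on the monochromatic-triangle lemma together with the observation that the loop only ever merges triangles that already touch the current set $S_1$; isolating exactly the right loop invariant, namely that $S_1$ never leaves a single part, is the step that makes the impossibility argument go through. The same termination bound on the number of iterations simultaneously yields the polynomial running time.
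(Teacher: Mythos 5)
Your proposal is correct and follows essentially the same route as the paper's proof: the triangle-free case, soundness via the failed loop guard (no triangle meets $S_1$ in exactly one or two vertices, so both $S_1$ and its complement are convex), and completeness via the invariant that $S_1$ stays inside whichever part of an assumed convex $2$-partition contains the initial triangle. Your explicit statement of the characterization of $\Delta$-convexity in terms of triangle intersections and of the monochromatic-triangle lemma only makes implicit steps of the paper's argument more visible.
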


\begin{proof}
According to Theorem~\ref{the:characC2P}, we need to check whether $G$ has at least one $\Delta$-extreme vertex or $T(G)$ is disconnected. The existence of $\Delta$-extreme vertices is done in line~\ref{lin:extreme}. The test of the connectedness of $T(G)$ is done implicitly in lines~\ref{lin:enumeratingK3} to~\ref{lin:no2partition} without its construction.
This test can be divided into the following three steps:
(1) to choose any vertex $t$ of $T(G)$, next (2) finding all vertices of the connected component $C$ containing $t$, and (3) test whether $V(C)$ is equal to $V(T)$.

Line~\ref{lin:enumeratingK3} is reached only if $G$ has no $\Delta$-extreme vertices, which means that every vertex of $G$ belongs to at least one triangle and that $T(G)$ has at least one vertex.
In line~\ref{lin:T}, one triangle of $G$ is chosen, which is equivalent to choose any vertex $t$ of $T(G)$, which implies that (1) is done.
In the while loop beginning in line~\ref{lin:while}, all vertices of the triangles sharing vertices to the triangles contained in $S_1$ are added to $S_1$ iteratively. Therefore, when this while loop finishes, $S_1$ contains all vertices of all triangles that are associated with the vertices of the connected component $C_1$, since two vertices are adjacent in $T(G)$ if and only if the corresponding triangles in $G$ share vertices, then (2) is completed.

When line~\ref{lin:afterwhile} is reached, the sets $S_1$ and $V(G) \setminus S_1$ satisfy the property that for every triangle $T$ of $G$, either $T \subseteq S_1$ or $T \subseteq V(G) \setminus S_1$. Then, according to Lemma~\ref{lem:atom}, $\{S_1, V(G) \setminus S_1\}$ is a $\Delta$-convex $2$-partition of $G$ as long as $V(G) \setminus S_1$ be non-empty. This test is done in lines~\ref{lin:afterwhile} to~\ref{lin:no2partition}, achieving $(3)$.
\end{proof}

\begin{theorem}
The time and memory complexities of Algorithm~$\ref{alg:convex2partition}$ is asymptotically equivalent to the task of enumerating the triangles of the input graph.
\end{theorem}

\begin{proof}
In line~\ref{lin:enumeratingK3}, an algorithm for enumerating all triangles of the input graph is used. In fact, with a variation, which consists that at the moment that a triangle is found by such algorithm, it is added in the list of triangles of each of its vertices. Each such list is initially empty. This tast is asymptotically equivalent to enumerating the triangles of the input graph.

Line~\ref{lin:extreme} is done in linear time since $G$ has a $\Delta$-extreme vertex $v$ if and only if at the list of triangles of $v$ is empty.

If $G$ contains triangles, the first found in line~\ref{lin:enumeratingK3} is stored to be used line~\ref{lin:T}, then this line is clearly done in linear time.

Now, consider the while loop beginning in line~\ref{lin:while}. The algorithm considers the vertices added to $S_1$ as queue, so that each element of each list of triangles of these vertices is considered only once. Since each triangle is formed by three vertices, each triangle is considered at most three times in this loop. To avoid that a vertex be added to $S_1$ twice, the algorithm can use a boolean $n$-vector to represent the vertices already added to $S_1$. Therefore, line~\ref{lin:S1} can be done in constant time and the whole while loop is asymptotically equivalent to the task of enumerating the triangles of the input graph.

Finally, it is clear that lines~\ref{lin:afterwhile} to~\ref{lin:no2partition} can be done in linear time, completing the proof.
\end{proof}

\section{Structural results}\label{sec:structural}

We begin this section by presenting some results relating the parameters $\Phi_c(G)$ and $\Theta_c(G)$ with well-known results like chromatic number and hull number. We conclude this section presenting closed formulas for determining these parameters on graph products.

A connected graph $G$ with no cut-vertices is said to be {\em $2$-connected}.  
The \emph{chromatic number} of a graph $G$ can be defined as a minimum number of colours needed to colour $G$ in such a way that no two adjacent vertices have the same colour, denoted as $\chi(G)$.  A graph $G$ is \textit{chordal}
if $G$ have no induced cycles $C_n$ with length $n$ more than three.  A graph is \textit{dually	chordal} if it has a maximum neighborhood ordering. A \textit{block graph} is a graph in which each block is complete.  Note that every block graph is a chordal graph.

We begin this section by considering a block graph $G$ of order $n \ge 2$. Note that if $G$ is a complete graph, then $\Phi_{c}(G) = n$; and that if $G$ is not complete, then $\Phi_{c}(G) = 2$. The next result determines $\Theta_c(G)$ for block graphs.

\begin{theorem}
	If $G$ is a connected block graph of order $n \ge 2$, then $\Theta_c(G)$ is the order of the smallest block of $G$.
\end{theorem}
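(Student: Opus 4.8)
The plan is to reduce everything to a clean description of the $\Delta$-convex sets of a block graph and then bound $\Theta_c(G)$ from both sides. First I would record the key structural fact: since every block of a block graph is a clique and every triangle is $2$-connected, each triangle lies inside a single block. Consequently a vertex $w\notin S$ lies in $[S]$ exactly when some block $B$ satisfies $w\in B$ and $|S\cap B|\ge 2$. This yields the working characterization on which the whole proof rests: a set $S$ is $\Delta$-convex if and only if for every block $B$ of $G$ one has $|S\cap B|\le 1$ or $B\subseteq S$. I would also note that, for a partition into pairwise disjoint sets, the condition $V_i\nsubseteq\bigcup_{j\ne i}V_j$ reduces to $V_i\ne\emptyset$, so a convex $p$-partition is simply a partition of $V(G)$ into $p$ nonempty $\Delta$-convex sets. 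Write $b$ for the order of a smallest block $B^*$, and note $b\ge 2$ because $G$ is connected of order at least $2$.

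For the lower bound I would argue that every convex partition with $p\ge 2$ parts satisfies $p\ge b$. Call a block \emph{monochromatic} if it is contained in a single part. If every block were monochromatic, then any two adjacent vertices (which share the block containing their edge) would lie in the same part, and connectivity of $G$ would force all of $V(G)$ into one part, contradicting $p\ge 2$. Hence some block $B$ is not monochromatic; for this $B$ no part contains it, so the characterization forces $|V_i\cap B|\le 1$ for every $i$. Summing over the parts gives $|B|=\sum_i|V_i\cap B|\le p$, and since every block has at least $b$ vertices we obtain $p\ge |B|\ge b$. In particular no convex $p$-partition exists for $2\le p<b$.

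For the upper bound I would exhibit a convex $b$-partition. Root the block-cut tree of $G$ at $B^*$ and let $x_1,\dots,x_b$ be the vertices of $B^*$. For each $i$ set $V_i$ to be $x_i$ together with all vertices lying in the subtree hanging below $x_i$ (away from $B^*$). These sets are nonempty, pairwise disjoint, and cover $V(G)$. To check convexity, consider any block $B$: if $B=B^*$ then $V_i\cap B^*=\{x_i\}$, so $|V_i\cap B^*|\le 1$; and if $B\ne B^*$ then $B$ descends from a single $x_k$, whence $B\subseteq V_k$ while $B\cap V_i=\emptyset$ for $i\ne k$. In either case the characterization is satisfied, so every $V_i$ is $\Delta$-convex, giving a convex $b$-partition. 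Combined with the lower bound this yields $\Theta_c(G)=b$.

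I expect the main obstacle to be the verification underlying the upper bound, namely showing that every non-root block is wholly contained in one branch $V_k$. This is exactly where the block-cut-tree structure and the handling of shared cut vertices must be argued with care: one must check that the topmost vertex of such a block (the cut vertex joining it to its parent) already lies in $V_k$, so that its remaining vertices, being descendants, stay in the same part. Everything else is a routine consequence of the convexity characterization.
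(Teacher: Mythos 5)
Your proof is correct and follows essentially the same strategy as the paper: the upper bound partitions $V(G)$ by attaching to each vertex of a smallest block the component of $G$ minus the rest of that block hanging off it, and the lower bound uses the observation that a $\Delta$-convex set meets each block in at most one vertex or contains it entirely, so a non-monochromatic block forces $p\ge b$. The only (harmless) differences are that you make the block-wise characterization of $\Delta$-convex sets explicit and rule out the all-monochromatic case via connectivity, where the paper instead notes that a shared cut vertex would violate disjointness of the parts.
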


\begin{proof}
	Let $p$ be the order of a  smallest block $B$ of $G$. Write $V(B)=\left\{v_{1}, v_{2}, \ldots, v_{p}\right\}$. Now we divide $V(G)$ into $p$ sets ${\cal V}=\left\{V_{1}, V_{2}, \ldots, V_{p}\right\}$ as following. For $i \in \{1, \ldots, p\}$, define $V_i = V(G_{v_i,B})$ where $G_{v_i,B}$ is the connected component of $V(G)\setminus (V(B) \setminus  \{v_i\})$ containing $v_i$ whenever $v_i$ is cut vertex of $G$ and
	$V_i = \{v_i\}$ otherwise. For $i \in \{1, \ldots, p\}$, $V_{i}$ is convex because there is not triangle in $G$ with exactly two vertices in $V_i$. Therefore ${\cal V}=\left\{V_{1}, V_{2}, \ldots, V_{p}\right\}$ is a convex partition of $G$.
	
	For $p = 2$, it is clear that $\Theta_c(G) = 2$.
	From now on, consider that $p \ge 3$.  Assume for contradiction that $1 < m < p$. Let $B$ be a block of $G$. We have either $B \subseteq U_i$
	for some $i$ or $|B \cap U_i | \leq 1$ for each $1 \leq i \leq m$. Since $m < p$ by assumption, there must exist
	$u, v \in B$ such that $u, v \in U_i$ for some $i$. We obtain $B \subseteq U_i$ . In other words, each block of $B$ is
	contained in some $U_i$. Moreover, each $B$ is contained in the same $U_i$ as all its incident blocks. Since
	$G$ is connected, we deduce $V(G) = U_i$ by repeating the previous argument up to saturation. This
	contradicts $\{U_1 , \ldots , U_m\}$ being a convex partition of $V(G)$.
\end{proof}

Now, we consider chordal and dually chordal graphs. We need the following theorem from \cite{bsaamc}.

\begin{theorem} \label{the:2conDuaChordal} {\em \cite{bsaamc}}
	Let $G$ be a $2$-connected graph. If $G$ is chordal or dually chordal, then every pair of adjacent vertices form a $\Delta$-hull set of $G$.
\end{theorem}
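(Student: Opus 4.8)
The plan is to prove the statement in the equivalent form that $V(G)$ is the only $\Delta$-convex set of $G$ containing two adjacent vertices; since $\hull{\{u,v\}}$ is by definition the smallest $\Delta$-convex set containing the adjacent pair $\{u,v\}$, this yields $\hull{\{u,v\}}=V(G)$ for every edge $uv$. I would in fact establish the hull statement directly by induction on $|V(G)|$ along an elimination ordering: a perfect elimination ordering in the chordal case and a maximum neighborhood ordering in the dually chordal case. The common mechanism is that the vertex $v$ deleted at each step forms a triangle with two of the surviving vertices, together with the monotonicity fact that, for $S\subseteq V(G)\setminus\{v\}$, every triangle of $G-v$ is a triangle of $G$, so that $\hull{S}$ computed in $G-v$ is contained in $\hull{S}$ computed in $G$.

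For the chordal case, if $G$ is complete the claim is immediate, since a third vertex completes a triangle with any given edge. Otherwise take a simplicial vertex $v$; as $G$ is $2$-connected, $\deg(v)\ge 2$ and $N(v)$ is a clique with at least two vertices. The first key step is that $G-v$ is again $2$-connected: if some $z$ were a cut vertex of $G-v$, then $N(v)\setminus\{z\}$, being a clique, would lie in a single component of $G-v-z$, so $v$ could not reconnect the remaining components in $G-z$, contradicting the $2$-connectedness of $G$. Since $G-v$ is also chordal, the induction hypothesis gives that the hull of every edge of $G-v$ equals $V(G)\setminus\{v\}$, and by the monotonicity fact this set is contained in the corresponding hull in $G$. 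Reinserting $v$ is then trivial: two adjacent vertices of the clique $N(v)$ form a triangle with $v$, so $v$ enters the hull and we obtain $V(G)$. Edges incident to $v$ are handled by first using a triangle through $v$ and a second clique neighbor to reach an edge of $G-v$.

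For the dually chordal case I would run the analogous induction along a maximum neighborhood ordering. Here the first vertex $v$ has a maximum neighbor $m$ with $N[v]\subseteq N[m]$, where $N[x]=N(x)\cup\{x\}$; thus $m$ is adjacent to every neighbor of $v$, which again produces a triangle $v\,m\,w$ for every other neighbor $w$ of $v$. This is exactly what is needed both to reinsert $v$ and to launch the spreading from any edge incident to $v$ (starting from $\{v,m\}$ one immediately captures all of $N(v)$).

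The hard part, and where I expect the main obstacle, is maintaining the inductive hypotheses in the dually chordal case. Unlike a clique, the set $N(v)\setminus\{m\}$ need not be connected after deleting $m$, so $G-v$ may fail to be $2$-connected; a short argument shows that the only vertex that can become a cut vertex of $G-v$ is $m$ itself, in which case $m$ glues together several components $D_1,\dots,D_k$ of $G-v-m$, into each of which both $v$ and $m$ send an edge. Compounding this, dual chordality is not hereditary, so one cannot simply recurse on the pieces $G[D_i\cup\{v,m\}]$. I would dispatch this degenerate configuration separately: from the edge $vm$ one first captures all of $N(v)$, hence a vertex of every $D_i$, and then spreads inside each $D_i$ using the maximum neighbor of its first vertex in the ordering, which stays dominated by a vertex already in the hull. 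Making this final step watertight, i.e.\ controlling the interaction between the maximum neighborhood ordering and the component structure created by the cut vertex $m$, is the crux of the dually chordal case.
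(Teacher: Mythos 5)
First, note that the paper does not actually prove this statement: Theorem~\ref{the:2conDuaChordal} is imported from \cite{bsaamc} and used as a black box, so there is no in-paper proof to compare against; I can only judge your argument on its own merits. Your chordal half is sound and essentially complete: the base case for complete graphs, the observation that deleting a simplicial vertex $v$ of a $2$-connected chordal graph leaves a $2$-connected chordal graph (because $N(v)\setminus\{z\}$ is a clique and hence lies in one component of $G-v-z$), the monotonicity of the hull under vertex deletion, and the reinsertion of $v$ via a triangle inside the clique $N(v)$ all check out, including the handling of edges incident to $v$.

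The dually chordal half, however, has a genuine gap, and you say so yourself. The induction does go through partially: if $v$ is the first vertex of a maximum neighborhood ordering, then $G-v$ is again dually chordal (the ordering condition for the remaining vertices only refers to the residual graphs), and your argument that only the maximum neighbor $m$ can become a cut vertex of $G-v$ is correct. But the degenerate case where $m$ \emph{is} a cut vertex of $G-v$ is not dispatched by your sketch. Concretely: (i) the restriction of the maximum neighborhood ordering of $G$ to a component $D_i$ of $G-v-m$ is not a maximum neighborhood ordering of $G[D_i]$ or of $G[D_i\cup\{v,m\}]$, since maximum neighbors are certified in residual graphs of $G$ that still contain vertices of the other components; (ii) the pieces $G[D_i\cup\{v,m\}]$ need be neither $2$-connected nor dually chordal, so no inductive hypothesis applies to them; and (iii) the proposed ``spreading inside each $D_i$ using the maximum neighbor of its first vertex'' lacks the invariant that matters, namely that whenever a vertex is to be absorbed, \emph{two mutually adjacent} already-absorbed vertices form a triangle with it --- knowing that one vertex of $D_i$ and the vertex $m$ are in the hull does not by itself generate any new triangle. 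A small example showing the obstacle is real: in $K_4$ minus an edge, with $v$ the first vertex and $m$ its maximum neighbor, $G-v$ is a path and the recursion collapses, even though the theorem itself holds there. Until this configuration is handled by an actual argument (or the dually chordal case is rerouted through a different characterization, e.g.\ via the tree structure of the closed-neighborhood hypergraph), the proof covers only the chordal case.
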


For a graph $G$ in which every pair of adjacent vertices is a hull set, the only proper convex sets of $G$ are the independent sets. In that case the convex cover number and the convex partition number of $G$ are equal and coincide with the chromatic number $\chi(G)$. Using Theorem~\ref{the:2conDuaChordal}, we have the following equalities.

\begin{corollary}
	If $G$ is a chordal or dually chordal graph, then $\Phi_{c}(G) = \Theta_c(G) = \chi(G)$.
\end{corollary}

The next result presents a property of the graphs admitting a convex 2-cover.

\begin{lemma}\label{lem:p2}
	Let $G$ be a graph of order $n \geq 3$. If $\Phi_{c}(G) = 2$, then $h(G) \geq 3$.
\end{lemma}

\begin{proof}
	Suppose for contradiction that $h(G) = 2$ and that $G$ has non-comparable convex sets $V_1$ and $V_2$ such that $V_1 \cup V_2 = V(G)$. Let $u_1,u_2 \in V(G)$ with $\langle u_1,u_2 \rangle = V(G)$. Since $|V(G)| \ge 3$, there is a vertex $u_3$ forming a triangle with $u_1$ and $u_2$. Since $\{u_1,u_2\}$ is a hull set, we can assume without loss of generality that $u_1 \in V_1$ and $u_2,u_3 \in V_2$. Note that $u_1 \in [V_2]$, which contradicts the assumption that $V_2$ is a convex set.
\end{proof}

Since $\Phi_{c}(G) \le \Theta_c(G) = 2$, Lemma~\ref{lem:p2} implies that if $\Theta_c(G) = 2$ and $n \ge 3$, then $h(G) \geq 3$. In general, the converse of the Lemma~\ref{lem:p2} does not hold. See Figure~\ref{fig:lemma2}. \\ 

\begin{figure}[H]
	\centering
	
	\begin{tikzpicture}[scale=1.2]
		
		\pgfsetlinewidth{1pt}
		
		\tikzset{vertex/.style={circle, draw, minimum size=5pt, inner sep=0pt}}
		
		\def\h{2}
		\def\v{1}
		
		\node [vertex] (u1) at (\h, \v) {$u_1$};
		\node [vertex] (u2) at (\h, \v-1) {$u_2$} edge (u1);
		\node [vertex] (u3) at (\h-2, \v) {$u_3$} edge (u1);
		\node [vertex] (u4) at (\h-2, \v-1) {$u_4$} edge (u2) edge (u3);
		
		\draw (u1) -- (u4);
		\draw (u2) -- (u3);
		
		\def\h{2}
		\def\v{-3}
		
		\node [vertex] (v1) at (\h, \v) {$v_1$};
		\node [vertex] (v2) at (\h, \v-1) {$v_2$} edge (v1);
		\node [vertex] (v3) at (\h-2, \v) {$v_3$} edge (v1);
		\node [vertex] (v4) at (\h-2, \v-1) {$v_4$} edge (v2) edge (v3);
		
		\draw (v1) -- (v4);
		\draw (v2) -- (v3);
		
		\draw (u2) -- (v1);
		
		\def\h{4}
		\def\v{-2}
		
		\node [vertex] (w3) at (\h, \v+2) {$w_3$} edge (u2);
		\node [vertex] (w4) at (\h, \v-1) {$w_4$} edge (v1) edge (w3);
		
		\node [vertex] (x1) at (\h-8, \v+3) {$x_1$} edge (v3) edge (u3) edge (w3);
		\node [vertex] (x2) at (\h-8, \v+2) {$x_2$} edge (v3) edge (u4) edge[bend left=45] (w3);
		\draw (x2) -- (x1);
		\draw (w3) -- (v3);
		
		\draw (u3) -- (x2);
		\draw (u4) -- (x1);
		
		\node [vertex] (z1) at (\h-8, \v-1) {$z_1$} edge[bend right=45] (w4) edge (u4) edge (v3);
		\node [vertex] (z2) at (\h-8, \v-2) {$z_2$} edge (w4) edge (u4) edge (v4);
		\draw (z2) -- (z1);
		\draw (w4) -- (u4);
		
		\draw (v3) -- (z2);
		\draw (v4) -- (z1);
		
	\end{tikzpicture}
	
	\caption{Graph $G$ such that $h(G) \ge 3$ and $\Phi_c(G) \ge 3$.}
	\label{fig:lemma2}
\end{figure}


\subsection{Graph Products}\label{graph_product}

For the three graph products considered here on graphs $G$ and $H$, the vertex set is $V(G)\times V(H)$. Their edge sets are defined as follows. In the \emph{Cartesian product}, $G\Box H$, two vertices are adjacent if they are adjacent in one coordinate and equal in the other. 
Two vertices $(g_1, h_1)$ and $(g_2, h_2)$ are adjacent with respect to the \emph{strong product}, $G \boxtimes H$, if $(a)$ $g_1 = g_2$ and $h_1h_2 \in E(H),$ or $(b)$ $h_1 = h_2$  and  $g_1g_2\in E(G)$, or $(c)$ $g_1g_2\in E(G)$ and $h_1h_2\in E(H).$
Finally, two vertices $(g_1,h_1)$ and $(g_2,h_2)$ are adjacent in the \emph{lexicographic product}, $G \circ H$ if, either $g_1g_2\in E(G)$ or $g_1=g_2$ and $h_1h_2\in E(H)$.

Let $G$ and $H$ be graphs and let $v \in V(G)$. Recall that $v$ is a {\em $\Delta$-extreme vertex} if $V(G) \setminus \{v\}$ is $\Delta$-convex. Observe that if both $G$ and $H$ contain $\Delta$-extreme vertices, then $G \Box H$ also contains $\Delta$-extreme vertices, which implies that $\Phi_c(G \Box H)= \Theta_c(G \Box H)= 2 $.

\begin{theorem} \label{boxcover}
	Let $G$ and $H$ be two connected non-trivial graphs such that at least one of them has a $\Delta$-extreme vertex. Then $\Phi_{c}(G \Box H) = \Theta_{c}(G \Box H) =  2 $.
\end{theorem}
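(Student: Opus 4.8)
The plan is to reduce the equality to the single task of exhibiting a convex $2$-partition of $G \Box H$. Since every convex $p$-partition is in particular a convex $p$-cover, we always have $\phi_c \le \Theta_c$, and both parameters are at least $2$ by definition; hence producing one convex $2$-partition $(S_1,S_2)$ forces $\phi_c(G \Box H) = \Theta_c(G \Box H) = 2$ simultaneously. So from the outset I would aim only at a partition.

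The key structural fact I would establish first is that every triangle of $G \Box H$ lies entirely inside a single layer, i.e. inside a copy $\{(g,h_0) : g \in V(G)\}$ of $G$ for some fixed $h_0$, or inside a copy $\{(g_0,h) : h \in V(H)\}$ of $H$ for some fixed $g_0$. To see this, take three pairwise-adjacent vertices; each product edge forces its two endpoints to agree in exactly one coordinate. If two of the three edges agreed in \emph{different} coordinates, a direct check shows the remaining pair of vertices would then differ in both coordinates and so could not be adjacent. Thus all three edges agree in the same coordinate, placing the whole triangle in one layer. This is the heart of the matter: since $\Delta$-convexity is governed entirely by triangles, reducing triangles to single layers lets me analyse the product one coordinate at a time. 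I expect this lemma to be the only genuine obstacle; everything after it is a routine case check.

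Assuming without loss of generality that $G$ has a $\Delta$-extreme vertex $v$ (so $v$ lies in no triangle of $G$), I would partition by the first coordinate, taking
\[
S_1 = \{v\} \times V(H), \qquad S_2 = \bigl(V(G)\setminus\{v\}\bigr) \times V(H).
\]
These are disjoint and cover $V(G \Box H)$, and both are nonempty because $G$ is nontrivial (so $V(G)\setminus\{v\} \ne \emptyset$) and $H$ is nontrivial; for a partition into two nonempty parts the non-containment condition of a cover is automatic.

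It then remains to verify convexity via the layer lemma. For $S_1$: any triangle with two vertices in $S_1$ has those two vertices sharing first coordinate $v$, and two distinct such vertices cannot lie in a common $G$-layer, so by the lemma the triangle sits in the $H$-layer over $v$; its third vertex then also has first coordinate $v$ and lies in $S_1$. For $S_2$: a triangle with two vertices in $S_2$ and third vertex $(v,h_0) \in S_1$ cannot lie in an $H$-layer, since the two $S_2$-vertices would then be forced to share the first coordinate $v$ of $(v,h_0)$, contradicting that they avoid $v$; hence it lies in a $G$-layer with fixed second coordinate $h_0$, where it projects to a triangle of $G$ containing $v$ --- contradicting that $v$ is $\Delta$-extreme. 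Thus neither $S_1$ nor $S_2$ admits a triangle with exactly two of its vertices inside, so both are $\Delta$-convex, $(S_1,S_2)$ is the desired convex $2$-partition, and the theorem follows.
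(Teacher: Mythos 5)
Your proposal is correct and uses essentially the same construction as the paper: the partition $S_1 = \{v\} \times V(H)$, $S_2 = (V(G)\setminus\{v\}) \times V(H)$. The only difference is that you explicitly prove the layer lemma (every triangle of $G \Box H$ lies in a single $G$- or $H$-layer) that the paper's proof leaves implicit when it asserts that no vertex of $\{x\}\times V(H)$ forms a triangle with vertices outside that layer.
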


\begin{proof} Assume that $G$ has a $\Delta$-extreme vertex, say $x$. Then any vertex in $\{x\} \times V(H)$ will not form a triangle with the vertices of $ V(G) \setminus (\{x\} \times V(H))$. Then $ S_1= \{x\} \times V(H)$ and $S_2= V(G) \setminus (\{x\} \times V(H))$ will be two proper convex sets in $G \Box H $ and $ S_1 \cup  S_2 = V(G \Box H)$. Therefore $\Phi_{c}(G \Box H) =  2$.
\end{proof}

\begin{theorem}\label{covercart}
	Let $G$ and $H$ be two connected non-trivial graphs. Then $\Phi_{c}(G \Box H) = \min\{\Phi_c(G), \Phi_c(H)\}$ and $\Theta_{c}(G \Box H) = \min\{\Theta_c(G), \Theta_c(H)\}$.
\end{theorem}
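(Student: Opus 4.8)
The plan is to lift a convex cover (respectively, partition) of one factor to the product by taking \emph{cylinders} of the form $V_i \times V(H)$, and then to invoke the commutativity of the Cartesian product to obtain the symmetric bound. Since only an upper bound is claimed, I would not need to argue in the reverse direction; it suffices to exhibit, from a convex $p$-cover of $G$, a convex $p$-cover of $G \Box H$ with the same $p$. The whole argument rests on a single structural fact about triangles in Cartesian products, so I would isolate that first.

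First I would prove the key claim: every triangle of $G \Box H$ lies inside one layer, i.e.\ its three vertices share a common $G$-coordinate or a common $H$-coordinate. Each edge of $G \Box H$ either fixes the $H$-coordinate (a $G$-edge) or fixes the $G$-coordinate (an $H$-edge). Taking any triangle and examining its three edges, a short case analysis rules out a mixture: if one edge fixes the $H$-coordinate at value $h$ while another fixes the $G$-coordinate, the vertex common to the two edges is forced to have two incompatible coordinates, a contradiction. Hence all three edges are of the same type, and the triangle projects to a triangle of $G$ inside a fixed $H$-layer, or to a triangle of $H$ inside a fixed $G$-layer.

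Next, given a convex set $V_i$ of $G$, I would show that $W_i := V_i \times V(H)$ is convex in $G \Box H$. Suppose a vertex $(g,h) \notin W_i$, so $g \notin V_i$, completes a triangle with two vertices of $W_i$. By the key claim this triangle lies in a single layer. If it lies in an $H$-layer, its three vertices share the $G$-coordinate $g$, which forces $g \in V_i$, a contradiction. If it lies in a $G$-layer at height $h$, the triangle is $\{(g_1,h),(g_2,h),(g_3,h)\}$ with $\{g_1,g_2,g_3\}$ a triangle of $G$; two of these vertices lie in $W_i$, so two of $g_1,g_2,g_3$ lie in $V_i$, and since $V_i$ is convex the third lies in $V_i$ as well, again contradicting $g \notin V_i$. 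Thus $W_i$ is convex.

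Finally I would assemble the pieces. Starting from a convex $p$-cover $(V_1,\dots,V_p)$ of $G$, the cylinders $W_i = V_i \times V(H)$ cover $V(G \Box H)$ because $\bigcup_i V_i = V(G)$; each $W_i$ is convex by the previous step; and the non-containment condition transfers, since any $g \in V_i \setminus \bigcup_{j \ne i} V_j$ witnesses via $(g,h)$ that $W_i \not\subseteq \bigcup_{j \ne i} W_j$. If the $V_i$ are pairwise disjoint, then so are the $W_i$, so a convex partition lifts to a convex partition. This yields $\phi_c(G \Box H) \le \phi_c(G)$ and $\Theta_c(G \Box H) \le \Theta_c(G)$, and running the same construction on $H \Box G \cong G \Box H$ gives the bounds with $H$; taking the minimum completes the proof. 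The only real obstacle is the structural claim about triangles lying in a single layer; once that is in hand, every remaining step is routine verification.
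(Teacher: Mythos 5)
Your proposal is correct and follows essentially the same route as the paper: lift a convex cover (or partition) of one factor to cylinders $V_i \times V(H)$, verify convexity via the fact that every triangle of $G \Box H$ is confined to a single layer, and use commutativity of $\Box$ to get the minimum. Your version is in fact somewhat more careful than the paper's, since you explicitly prove the triangle-in-a-layer lemma and the non-containment transfer that the paper only asserts ``by the definition of Cartesian product.''
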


\begin{proof} 
	Let $m$ and $n$ be the convex cover numbers of $G$ and $H$, respectively. We have to prove $\Phi_{c}(G \Box H) \leq \min\{m, n\}$. Since $\Phi_c(G)=m$, there exists $m$ proper convex sets $V_1,V_2,\ldots,V_m$ of $G$ with $V_1\cup V_2\cup \ldots \cup V_m=V(G)$. Since each $V_i$ is a proper convex sets in $G$ for $i \in \{1, \ldots, m\}$, no edge of $V_i$ forms a triangle with any vertex of $V_j$ for $j \in \{1, \ldots, m\} \setminus\{i\}$. By the definition of Cartesian product of graphs, no edge of $V_i\times V(H)$ forms a triangle with any vertices of $V_j\times V(H)$ for $j \in \{1, \ldots, m\} \setminus \{i\}$. So each $V_i\times V(H)$ is a proper convex set in $G\Box H$ and $\displaystyle \cup_{i=1}^m V_i\times V(H)=V(G\times H)$. Since the Cartesian product is commutative, it holds that $\Phi_{c}(G \Box H) \leq \min\{\Phi_c(G), \Phi_c(H)\}$.
	
	Assume for a moment that $\Phi_c(G)<\Phi_c(H)$. Let $\{V_1, V_2\ldots, V_s \}$ be a convex partition set of $G\Box H$ with $s<\Phi_c(G)$. Since $\{V_1, V_2\ldots, V_s \}$ is a convex partition set of $G\Box H$, $\{\pi_G(V_1), \pi_G(V_2)\ldots, \pi_G(V_s) \}$ is a convex partition set of $G$, which contradicts to the fact that $s<\Phi_c(G)$. Therefore $\Phi_c(G\Box H)=\Phi_c(G)$. Similarly we can prove that $\Phi_c(G\Box H)=\Phi_c(H)$, when $\Phi_c(H)<\Phi_c(G)$. Giving the similar proof we can conclude that $\Phi_c(G\Box H)=\Phi_c(G)=\Phi_c(H)$, when $\Phi_c(G)=\Phi_c(H)$. Therefore $\Phi_{c}(G \Box H) = \min\{\Phi_c(G), \Phi_c(H)\}$.
	
	By giving a similar argument, we can conclude that \\ $\Theta_{c}(G \Box H) = \min\{\Theta_c(G), \Theta_c(H)\}$.	
\end{proof}

\begin{corollary}
	Let $G$ and $H$ be two non-trivial connected graphs such that at least one of them contains cut vertices. Then $\Phi_{c}(G \Box H) = 2 $.
\end{corollary}

\begin{proof}
	Since the Cartesian product is commutative, we can assume that $G$ has a cut vertex $v$. Let $C_1$ and $C_2$ be connected components of the induced subgraph of $V(G) \setminus \{v\}$. Take $V_1 = V(C_1) \cup \{v\}$ and $V_2 = V(G) \setminus V(C_1)$. Then $V_1$ and $V_2$ are convex sets in $G$ and $V_1\cup V_2=V(G)$. No two vertices of $V_1$ forms a triangle with the vertices of $V_2$ and vice versa. By the definition of Cartesian product of graphs, no two vertices of $V_1\times V(H)$ form a triangle with the vertices of $V_2\times V(H)$. Similarly no two vertices of $V_2\times V(H)$ form a triangle with the vertices of $V_1\times V(H)$. As a result $V_1\times V(H)$ and $V_2\times V(H)$ are two proper convex sets in $G\Box H$ and $(V_1\times V(H))\cup (V_2\times V(H))=V(G\Box H)$. Therefore $\Phi_{c}(G \Box H) = 2 $. \end{proof}
\begin{corollary}
	\begin{enumerate}
		\item For $m,n\geq 2$, $\Phi_c(K_m\Box K_n)=\min\{m,n\}$.
		\item  For $m,n\geq 2$, $\Phi_c(T_m\Box T_n)=2$, where $T_m$ and $T_n$ are trees of order $m$ and $n$, respectively. 
		\item For $m,n\geq 2$, $\Phi_c(C_m\Box C_n)=2$, where $C_m$ and $C_n$ are cycles of order $m$ and $n$, respectively. 
		
	\end{enumerate}
\end{corollary}

\begin{theorem}\label{GH} {\em \cite{anand2021delta}}
	Let $G$ and $H$ be two connected non-trivial graphs. Then every two adjacent vertices form a $\Delta$-hull set of $G \ast H$ for $\ast\in\{\boxtimes, \circ\}$.
\end{theorem}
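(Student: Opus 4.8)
The plan is to handle both products with the same high-level strategy, exploiting that the two target graphs are triangle-rich: starting from an arbitrary edge $ab$, I first show that $\hull{a,b}$ is forced to contain a full \emph{fiber} $\{g\}\times V(H)$ (a copy of $H$), and then show that one full fiber already forces all of $V(G\ast H)$ by propagating across $G$, which is connected. The two products differ only in how many cross-fiber edges are available, so the two ingredients require slightly different triangles; I would therefore treat $\boxtimes$ and $\circ$ separately.

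For the strong product $G\boxtimes H$, the basic object is a \emph{vertical} edge, i.e. a pair $(g,h),(g',h)$ with $gg'\in E(G)$. First I would record a propagation step: if $(g,h),(g',h)$ lie in a convex set $S$ and $h'$ is a neighbour of $h$ in $H$, then $(g,h'),(g',h')\in[S]$, because $\{(g,h),(g',h),(g,h')\}$ and $\{(g,h),(g',h),(g',h')\}$ are triangles (using rules (b), (a), (c) in turn). Iterating this along the edges of a spanning tree of the connected graph $H$ forces both full fibers $\{g,g'\}\times V(H)$ into the hull. A symmetric triangle $\{(g,h),(g,h'),(g'',h)\}$, with $h'$ a neighbour of $h$, propagates a full fiber across any edge $gg''\in E(G)$, so connectedness of $G$ upgrades a single full fiber to all of $V(G\boxtimes H)$. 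It then remains to check that the hull of an arbitrary starting edge contains a vertical edge: a vertical edge needs nothing; a horizontal edge $(g,h_1),(g,h_2)$ (rule (a)) yields the vertical edge $(g,h_1),(g',h_1)$ after adding $(g',h_1)$ for a neighbour $g'$ of $g$ (a triangle with the given edge via rules (b),(c)); and a diagonal edge $(g_1,h_1),(g_2,h_2)$ (rule (c)) yields the vertical edge $(g_1,h_2),(g_2,h_2)$ after adding $(g_1,h_2)$, which forms a triangle with the two endpoints via rules (a),(b).

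For the lexicographic product $G\circ H$ the argument is shorter, since every $gg'\in E(G)$ induces a complete join between the fibers over $g$ and $g'$. If the starting edge is $(g,h_1),(g,h_2)$ with $h_1h_2\in E(H)$, then for a neighbour $g'$ of $g$ \emph{every} vertex $(g',h)$ is adjacent to both endpoints, so the whole fiber $\{g'\}\times V(H)$ is added at once. A starting edge $(g_1,h_1),(g_2,h_2)$ with $g_1g_2\in E(G)$ is reduced to this case: choosing a neighbour $h'$ of $h_1$ in $H$, the vertex $(g_1,h')$ forms a triangle with the edge and produces a same-fiber edge $(g_1,h_1),(g_1,h')$. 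Finally, once a full fiber $\{g'\}\times V(H)$ is present, for any $g''$ adjacent to $g'$ and any $h$ one adds $(g'',h)$ via the triangle $\{(g'',h),(g',h_a),(g',h_b)\}$, where $h_ah_b$ is any fixed edge of $H$; connectedness of $G$ then supplies every fiber.

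The step I expect to be the main obstacle is the fiber-filling inside the strong product. There the only cross-fiber adjacencies are the diagonal edges of rule (c), and one cannot fill a fiber using triangles confined to it, since a fiber is just a copy of $H$, which may be triangle-free. Hence the propagation genuinely has to alternate between the two fibers of a vertical edge and lean on the connectivity of $H$; identifying the correct alternating triangles is the crux. The case analysis on the starting edge and the entire lexicographic case are comparatively routine once these triangles are in hand.
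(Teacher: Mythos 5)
The paper does not actually prove this statement: it is imported from \cite{anand2021delta} and stated without proof, so there is no in-paper argument to compare against. Your proof is correct and self-contained: every triangle you invoke is genuinely a triangle under the relevant adjacency rules (in particular the alternating triangles $\{(g,h),(g',h),(g,h')\}$ and $\{(g,h),(g',h),(g',h')\}$ that fill the pair of fibers over a vertical edge of $G\boxtimes H$, which is indeed the delicate step since a single fiber of the strong product may be triangle-free), the non-triviality hypotheses supply the neighbours $g'$ and $h'$ needed in the case analysis of the starting edge, and connectivity of $H$ and then of $G$ drives the two propagation stages; the only detail worth spelling out in a written version is that each added vertex is distinct from the two hull vertices it triangulates with, which is immediate since adjacent coordinates are distinct.
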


The following theorem holds from Theorem~\ref{GH}.

\begin{theorem}
	Let $G$ and $H$ be two connected non-trivial graphs. Then $\Phi_{c}(G \ast H) = \Theta_{c}(G \ast H) = \chi(G\ast H)$ for $\ast\in\{\boxtimes, \circ\}$.
\end{theorem}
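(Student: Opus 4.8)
The plan is to reduce the statement to the general principle already recorded in Section~\ref{preliminaries}: whenever every pair of adjacent vertices of a graph forms a $\Delta$-hull set, the only proper convex sets are the independent sets, so that the cover number and the partition number both collapse to the chromatic number. All of the graph-product content is then carried by Theorem~\ref{GH}.

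First I would invoke Theorem~\ref{GH} to fix, for $\ast \in \{\boxtimes, \circ\}$, that every two adjacent vertices of $G \ast H$ form a $\Delta$-hull set. From this I would show that every proper convex set $S$ of $G \ast H$ is independent: were $S$ to contain adjacent vertices $u$ and $v$, convexity would force $\langle u, v \rangle \subseteq S$, but $\langle u, v \rangle = V(G \ast H)$ by Theorem~\ref{GH}, contradicting that $S$ is proper. For the reverse direction, every independent set $I$ is $\Delta$-convex, since two non-adjacent vertices can never be two of the three vertices of a triangle, whence $[I] = I$. Consequently the proper convex sets of $G \ast H$ are exactly its independent sets.

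Given this characterization, a family of convex sets covering (respectively partitioning) $V(G \ast H)$ is nothing but a family of independent sets covering (respectively partitioning) the vertex set, and the least cardinality of either is $\chi(G \ast H)$. Indeed, any cover of $V(G \ast H)$ by independent sets induces a proper coloring by selecting, for each vertex, one independent set containing it; conversely a proper coloring is a partition into $\chi(G \ast H)$ independent sets. A minimum such family is automatically non-redundant, so it is a legitimate convex cover; and since $G$ and $H$ are non-trivial and connected, $G \ast H$ has an edge and thus $\chi(G \ast H) \ge 2$, meeting the requirement $p \ge 2$ in the definitions. This yields $\phi_c(G \ast H) = \Theta_c(G \ast H) = \chi(G \ast H)$.

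I do not expect a genuine obstacle here, as the essential work is encapsulated in Theorem~\ref{GH} and in the observation---already used for the chordal and dually chordal corollary---that independent sets are the only proper convex sets. The one point deserving explicit care is why a convex cover, which permits redundant overlaps, cannot use fewer sets than a partition: this is settled by the remark that any independent-set cover refines to a proper coloring of no larger size.
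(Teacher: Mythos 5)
Your proposal is correct and follows exactly the paper's route: the paper derives this theorem immediately from Theorem~\ref{GH} together with the observation (stated before the chordal/dually chordal corollary) that when every pair of adjacent vertices is a $\Delta$-hull set, the proper convex sets are precisely the independent sets, so both parameters equal $\chi$. Your write-up merely fills in the details that the paper leaves implicit.
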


\section{Open problems} \label{sec:conclusion}

We conclude by calling attention to the remaining open cases. The {\sc $\Delta$-convex $p$-partition} is still open for $p = 3$ and the The {\sc $\Delta$-convex $p$-cover} is open for $p \in \{2,3\}$.

\section*{Acknowledgements}

Mitre C. Dourado is partially supported by
Conselho Nacional de Desenvolvimento Científico e Tecnológico (CNPq), Brazil, grant numbers 403601/2023-1 and 305141/2024-4.

\section*{Declaration}
{\bf Conflict of interest:} The authors declare that they have no known competing financial interests or personal relationships that could have appeared to influence the work reported in this paper.
\bibliographystyle{amsplain}
\bibliography{partition}

\end{document}